\documentclass[journal,twoside,web]{IEEEcolor}
\usepackage{generic}
\usepackage{stfloats}
\usepackage{savesym}
\usepackage{amsmath}
\usepackage{textcomp}
\savesymbol{iint}
\restoresymbol{TXF}{iint}
\usepackage{graphicx,subfigure}
\usepackage{epstopdf}
\usepackage{amssymb,amsfonts}
\usepackage{cite}
\usepackage{latexsym}
\usepackage{psfrag}
\usepackage{pifont}
\usepackage{caption}
\usepackage{color}
\usepackage{tikz}
\usepackage{bm}
\usetikzlibrary{arrows,shapes,chains}
\usepackage{bbm,mathrsfs}%
\usepackage{fancyhdr} 
\usepackage{verbatim} 
\usepackage{xcolor}

\usepackage{multirow}
\usepackage{setspace}
\usepackage{algpseudocode}
\usepackage[ruled,vlined,linesnumbered,noend]{algorithm2e}
\SetArgSty{textnormal} 

\let\labelindent\relax
\usepackage{enumitem}
\newtheorem{theorem}{Theorem}   
\newtheorem{lemma}{Lemma}
\newtheorem{problem}{Problem}

\newtheorem{example}{Example}
\newtheorem{remark}{Remark}

\newtheorem{definition}{Definition}

\newcommand{\matL}{\mathcal{L}}

\begin{document}
\title{Enforcing Opacity in Discrete Event Systems\\ via Delayed Observations}

\author{~Jiwei Wang,~Simone Baldi,~\IEEEmembership{Senior Member,~IEEE},~Wenwu Yu, ~\IEEEmembership{Senior Member,~IEEE},\\
	~and Xiang Yin,~\IEEEmembership{Member,~IEEE}\vspace{-1em}
	\thanks{This work was supported in part by the Jiangsu Provincial Scientific Research Center of Applied Mathematics grant BK20233002, and in part by the National Natural Science Foundation of China grants 62150610499, 62073074, 62073076, 62233004, 62173226. (Corresponding authors: Simone Baldi, Wenwu Yu)}
	\thanks{J. Wang is with School of Cyber Science and Engineering, Southeast University, Nanjing 210096, China (jwwang@seu.edu.cn).}
	\thanks{S. Baldi is with School of Mathematics, Southeast University, Nanjing 210096, China (simonebaldi@seu.edu.cn).}
	\thanks{W. Yu is with School of Mathematics, Southeast University, Nanjing 210096, China, and with School of Automation and Electrical Engineering, Linyi University, Linyi 276005, China (wwyu@seu.edu.cn).}
	\thanks{X. Yin is with School of Automation and Sensing, Shanghai Jiao Tong University, Shanghai 200240, China (yinxiang@sjtu.edu.cn).}
}

\maketitle

\begin{abstract}
	Artificially introducing a delay in the observations of a system can be an effective mechanism to mask the system itself, with the goal to increase its opacity and thus its security.
	This work investigates opacity in discrete event systems with delayed observations. 
	We focus on two questions: how to verify opacity under delayed	observations, and how to synthesize sensor activation policies that guarantee opacity under such delayed conditions.
	To address these questions, we first introduce the definition of opacity under delayed observation and develop a corresponding verification method.
	We then extend such analysis tool into a synthesis tool by proposing an optimization approach for designing sensor activation policies guaranteeing opacity under delayed observations.
	An example is used to illustrate the analysis and synthesis procedures.
\end{abstract}
\begin{IEEEkeywords}
	Discrete event systems, opacity, delayed observations, sensor activation, optimal policy
\end{IEEEkeywords}

\IEEEpeerreviewmaketitle

\section{Introduction}\label{1}

Maintaining security and privacy of a system is important in many scenarios. 
In the context of partially-observed discrete event systems (DESs), opacity has been proposed to guarantee that some secret state of the system remains hidden to inference from outside \cite{saboori2007notions}.
A system is opaque if, for every execution that reaches a secret state, there exists another execution with an identical set of observable events that does not reach a secret state: this way, it is not possible to unambiguously determine if the system is in a secret state.
Opacity of DESs has received significant attention \cite{jacob2016overview, lafortune2018history}. 
Despite the various mechanisms to enforce opacity \cite{ji2018enforcement,yin2019synthesis,mizoguchi2021abstraction,li2023opacity,duan2023edit,xie2024optimal,li2025opacity}, none of these exploits the fact that artificially introducing delayed observations (e.g., on a system that originally is not opaque) can help to make the system opaque. 
Potential applications of this extended notion of opacity span several domains \cite{jacob2016overview}. 
In these contexts, a defender may artificially introduce a delay in the sensors of the system to safeguard some secret states to an eavesdropper.

This paper aims to cover this gap: we consider opacity from the perspective of a defender who can artificially introduce a finite delay $K$ in the observations so as to mask the system to an eavesdropper.
We investigate two problems:
i) Analysis: how to verify opacity when a delay is introduced in its observations?
ii) Synthesis: how to synthesize (and possibly optimize) a sensor activation policy that guarantees opacity?
To analyze the effect of delayed observations, we extend the framework in \cite{wang2024distributed}, which is based on recording the delay of key events that help the verification of system properties \cite{wang2024fault}.
To synthesize an opaque system, we extend the dynamic mask synthesis in \cite{cassez2012synthesis}, also referred to as dynamic sensor activation, and the framework of most permissive observer in \cite{dallal2013most}. 
The observation is dynamic when the sensors can be dynamically activated or not depending on the current observation \cite{sears2016minimal,yin2019general}.

Given the above analysis and synthesis problems, this work provides the following contributions.
We address the analysis problem by introducing a definition of opacity under delayed observations, named $ K $-delayed opacity. 
Building on this definition, we propose a delay observer structure to capture opacity under dynamic and delayed observations, and we develop a corresponding verification method.
We address the synthesis problem by proposing a new container structure tailored to delayed observations.
This container structure encompasses all sensor activation policies that guarantee $ K $-delayed opacity, enabling the selection of an optimal policy according to a criterion defined in this work.
	Let us mention that the proposed $K$-delayed opacity should not be confused with $K$-step opacity \cite{saboori2007notions}, a notion that cannot address delayed observations.

The remainder of this paper is organized as follows.
Section II introduces partially-observed DESs under dynamic observation.
Section III presents the definition of $ K $-delayed opacity and the delay observer for verification.
Section IV addresses the synthesis problem.
Section V concludes the paper.

\section{Preliminaries}\label{2}

We start by recalling the basic formalism of automata.
The finite event set $E$ in DESs is modeled as an alphabet, so that a finite sequence of events in $E$ can be described as the \emph{concatenation} of a string of words in the alphabet.
A \emph{language} is a set of event strings formed from the alphabet $ E $.
Let $E^*$ denote the set of all finite strings over $E$, also known as Kleene or star closure \cite{cassandras2009introduction}.
For any string $ s\in E^* $, the length of $ s $ is denoted by $|s|$.
The empty string is denoted by $\epsilon$, with $|\epsilon| = 0$. 
The \emph{prefix-closure} of a language $L$ is defined as $\overline{L}=\{s\in E^* | \exists t\in E^*, \text{ s.t. } st\in L\}$.
A language $L$ is said to be \emph{prefix-closed} if $L=\overline{L}$.
We assume the language $L$ to be \emph{live}: $\forall s \in L, \exists e\in E $, s.t. $ se\in L$. In other words, any string in $ L $ can be extended to arbitrary length.

Automata are a fundamental framework for manipulating languages.
Consider a finite automaton defined as
\begin{equation}\label{S}
	G=(X, E, \alpha, X_0),
\end{equation}
where $X$ is the set of $ N $ finite system states, $E$ is the set of finite events, $ \alpha:X \times E^* \to 2^X $ is the transition function that describes the transition of an event string, and $X_0 \subseteq X $ is the set of initial states.
The language generated by $G$ is denoted by $\mathcal{L}(G)=\{s \in E^* | \alpha (x_0,s)!\ \exists x\in X_0 \}$, where $!$ means that the function is defined.
For any $ s\in \mathcal{L}(G) $, let us denote for brevity $\alpha(s):=\{\alpha(x,s)\mid x \in X_0\}$.
The \textit{accessible part} of $ G $ is the set of states $ \{x\in X\mid \exists s\in\matL(G),\, \text{s.t.}\, x\in\alpha(s)\} $.

A partially-observed DES is said to have \textit{dynamic observation} when the observability of the events can be controlled by a \textit{sensor activation policy} that depends on the current observation.
We divide the event set $ E $ into the observable events $ E_{o} $ and the unobservable events $ E_{uo} $.
A sensor activation policy is defined by the pair $ \Omega=(R,\Theta) $, where $ R=(Q,E,\omega,q_0) $ is a deterministic automaton that satisfies $ \matL(R)=E^* $, with $ Q $ the set of policy states and $ \omega: Q\times E^*\to Q $ the sensing transition function.
For each $ q\in Q $, $ \Theta(q)\in 2^{E_o} $ specifies the sensing decision, which determines the set of events monitored at $ q $.
To ensure that $ \Omega $ is feasible, the following feasibility condition \cite{wang2010optimal} must be satisfied:
\begin{equation}\label{FC}
\forall q,q'\in Q, e\in E: \omega(q,e)=q',[q\neq q'\Rightarrow e\in \Theta(q)],
\end{equation}
representing that sensing transitions in $ R $ occur only upon the observation of events.
This aligns with practical situations in which unobservable events cannot trigger sensing transitions.

Let us introduce a projection operator to obtain the observation of an event string:
$\forall se\in E^*,$
\begin{equation}
P_{\Omega}(\epsilon)= \epsilon,\ P_{\Omega}(se)=\left\{
\begin{aligned}
& P_{\Omega}(s)e, &\text{if} \ e\in \Theta(\omega(s)), \\
& P_{\Omega}(s),  &\text{if} \ e\notin \Theta(\omega(s)).
\end{aligned}
\right.
\end{equation}
The projection operator $ P_{\Omega} $ can also be applied to a language, that is, $ \forall L \subseteq E^* $, $ P_{\Omega}(L)=\{ s\in E_o^*| \exists s' \in L, \text{ s.t. } s=P_{\Omega}(s') \} $.
In other words, for any system trajectory $ s\in \mathcal{L}(G) $, $ \Theta(\omega(s)) $ and $ P_{\Omega}(s) $ return the current sensing decision and the observation under $ \Omega $, respectively.
Using $ P_\Omega $, the feasibility condition \eqref{FC} can also be stated as
\begin{equation}\label{FC2}
\forall s,s' \in \mathcal{L}(G),\ P_{\Omega}(s) = P_{\Omega}(s') \Rightarrow \omega(s) = \omega(s'),
\end{equation}
implying that when two system trajectories are indistinguishable, they must result in the same sensing decision.

Given $ P_{\Omega} $, we consider the operation $ \zeta^n_{\Omega}: \mathcal{L}(G) \rightarrow \mathcal{L}(G) $ to handle delayed information: $ \forall s \in \mathcal{L}(G) $,
\begin{equation}\label{key}
\zeta^n_{\Omega}(s) = \{s'' \in \overline{s} \mid \exists s' \!\in\! \overline{s}: |s'| \!\geq\! |s|-n, \text{s.t.} P_{\Omega}(s'') \!=\! P_{\Omega}(s') \},
\end{equation}
where $ n\in\mathbb{N} $ is the number of delay steps.
Intuitively, $ \{P_{\Omega}(s')\mid s' \in \overline{s}\wedge |s'| \geq |s|-n \} $ represents the set of possible observations that eavesdroppers may receive with delay after the occurrence of $ s $, and $ \zeta^n_{\Omega}(s) $ collects all prefixes of $ s $ that yield the same observation under $ \Omega $ as an element in this set.

\begin{example}\label{ESAP}
	Throughout this work, we consider the automaton $ G $ in Fig. \ref{fs} describing a simplified location-detection problem. 
	The state set $X = \{0,1,2,3\}$ corresponds to $N=4$ locations, while events $e_1$ and $e_2$ denote detection signals received from sensor $ 1 $ and sensor $ 2 $, respectively, with $E = E_o = \{e_1, e_2\}$. 
	Consider the sensor activation policy $ \Omega $ shown in Fig. \ref{fsap}, describing that sensor $ 2 $ is kept continuously active.
	Given the string $ e_1e_2 $, we have $ \Theta(0)=\{e_2\} $, $ P_{\Omega}(e_1e_2) = e_2 $, $ \zeta^0_{\Omega}(e_1e_2) = \{e_1e_2\} $ and $ \zeta^1_{\Omega}(e_1e_2) = \{\epsilon,e_1,e_1e_2\} $.
	\hfill \ensuremath{\Box}
\end{example}

\section{Opacity with Delayed Observations}\label{3}
	
In this section, we propose an opacity concept called $K$-delayed opacity, which incorporates artificially introduced delays as a mechanism for protecting the secret states.

\subsection{$ K $-delayed Opacity }\label{SDO}
		
Let us first recall the notion of opacity.
	
\begin{definition}\label{O}
	(Opacity \cite{saboori2007notions}) 
	Given a system model $G$ in \eqref{S} with a set of secret states $ X_S\subseteq X $ and a sensor activation policy $ \Omega $, the live language $ \mathcal{L}(G) $ is opaque w.r.t. $ X_S $ and $ P_{\Omega} $ if $ \forall s \in \mathcal{L}(G): \alpha(s)\subseteq X_S $,
	\begin{equation}\label{OC}
	\exists s'\in \mathcal{L}(G): P_{\Omega}(s')=P_{\Omega}(s),\text{ s.t. } \alpha(s')\not\subseteq X_S.
	\end{equation}
\end{definition}
\vspace*{1ex}

In other words, an opaque system is such that for any event string that leads to a secret state, there exists another string with the same observation leading to a non-secret state.

\begin{example}\label{E}
	Let the set of secret states of $ G $ in Fig. \ref{fs} be $ X_{S} = \{2,3\} $, representing that the defender wishes to protect the information of the current location being in states $2$ or $3$.
	Let the sensor activation policy $ \Omega $ be as in Fig. \ref{fsap}.
	Since $ \alpha(e_1e_2)=\{2\}\subseteq X_S, \alpha(e_1e_2e_1)=\{3\}\subseteq X_S $ and $ \{s'\in\matL(G)\mid P_\Omega(s') = P_\Omega(e_1e_2)\}=\{e_1e_2,e_1e_2e_1\} $, we have that $ \mathcal{L}(G) $ is not opaque w.r.t. $ X_S $ and $ P_{\Omega} $.
	Note that the transition $ (1,e_2,2) $ is key to an eavesdropper to know that the system will be in secret states until the next $ e_2 $ is observed.
	\hfill \ensuremath{\Box}
\end{example}

A system lacking opacity may become opaque when introducing an observation delay, motivating the following definition.

\begin{definition}\label{KO}
	($ K $-delayed opacity).
	Given a system model $G$ in \eqref{S} with a set of secret states $ X_S\subseteq X $ and a sensor activation policy $ \Omega $, the live language $ \mathcal{L}(G) $ is $ K $-delayed opaque w.r.t. $ X_S $ and $ P_{\Omega} $ if $ \forall s \in \mathcal{L}(G): |s|\geq K \wedge \alpha(s)\subseteq X_S $,
	\begin{equation}\label{KDOC}
	\exists s'\in \mathcal{L}(G):P_{\Omega}(s') \in P_{\Omega}(\zeta^K_{\Omega}(s)),\, \text{s.t.}\, \alpha(s')\not\subseteq X_S.
	\end{equation}
\end{definition}
\vspace{0.6ex}

If \eqref{KDOC} holds, the string $ s $ cannot be distinguished from another string $ s' $ that does not lead to secret states under delayed observations.
Thus, none of the events required to unambiguously determine if the system is in a secret state can be received timely by the eavesdropper.

As $ K $ increases, the strings that cannot be distinguished from the string $ s: \alpha(s)\subseteq X_S $ become more.
It can be verified from \eqref{KDOC} that $ K' $-delayed opacity $ \Rightarrow $ $ K $-delayed opacity when $ K \geq K' $, i.e., larger delay increases opacity. Clearly, opacity implies $K$-delayed opacity

\begin{figure}[t]
	\centering
	\subfigure[$ G $]
	{\label{fs}
		\includegraphics[height=2.5cm]{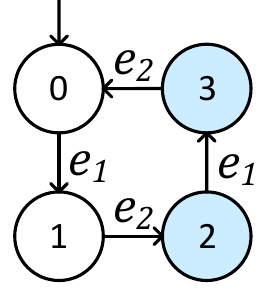} }
	\subfigure[$ \Omega $]
	{\label{fsap}
		\includegraphics[height=1.5cm]{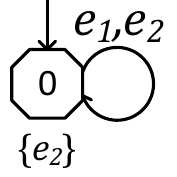}} 
	\subfigure[$ \mathcal{O}_{\Omega}^{K_1}(G) $]
	{\label{fdo1}
		\includegraphics[height=2.5cm]{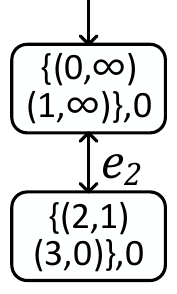}} 
	\subfigure[$ \mathcal{O}_{\Omega}^{K_2}(G) $]
	{\label{fdo2}
		\includegraphics[height=2.5cm]{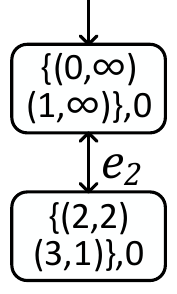}} 
	\caption{(a) System model $ G $, with secret states $ X_{S} = \{2,3\} $ marked in blue;
	(b) Sensor activation policy $\Omega$, maintaining sensor 2 continuously active;
	(c)-(d) Delay observers $ \mathcal{O}_{\Omega}^{K_1}(G) $ and $ \mathcal{O}_{\Omega}^{K_2}(G) $ with delay $ K_1\!=\!1 $ and $ K_2\!=\!2 $.}
	\label{fdo}
\end{figure}

\subsection{Delay Observer}\label{SKTCD}

Similar to opacity, verifying $K$-delayed opacity cannot be accomplished by examining all possible string of events of a system. 
It is essential to incorporate delay information in a verification method.
We incorporate delay information via a delay observer: we stress on the fact that existing delay observers based on recording the delay of at most one observable event \cite{wang2024fault} cannot be applied to the scenario under consideration, where 
multiple delays within an observation may need to be recorded.
Such difference requires to extend the delay observer structure.

\begin{algorithm}[!t]
	\caption{Construction of the delay observer $\mathcal{O}^K_{\Omega}(G)$}
	\label{AOS}
	\KwIn{$G \!=\! (X,\;\!\! E,\;\!\! \alpha,\;\!\! X_{0}), R \!=\! (Q,\;\!\! E,\;\!\! \omega,\;\!\! q_{0}),\;\!\! \Theta,\;\!\! K,\;\!\! E_o,\;\!\! X_S$;}
	\KwOut{$\mathcal{O}^K_\Omega(G) = (Y, E_o, \beta, y_{0})$;}
	
	$y_0 \gets (\emptyset, q_0)$; $u_0 \gets \infty$\;
	\If{$\phi(X_0, E_o)\subseteq X_S$}{
		$u_0 \gets K$\;
	}
	\For{$x_0 \in X_0$}{
		$I(y_0)\gets I(y_0)\cup(x_0,u_0)$;
		\texttt{REC-UNOBS}($x_0,u_0,y_0$)\;
	}
	$Y \gets \{y_0\}$;
	\texttt{REC-OBS}($y_0$)\;
	\textbf{Return} $\mathcal{O}^K_{\Omega}(G)$\;
	
	\vspace{0.6ex}
	\textbf{Procedure} \texttt{REC-UNOBS}($x, u, y$)\\
	\For{$e \in E \setminus \Theta(D(y)):\alpha(x, e)!$}{
		\For{$x' \in \alpha(x, e)$}{
			\eIf{$0 < u < \infty$}{
				$u' \gets u - 1$\;
			}{
				$u' \gets u$\;
			}
			\If{$(x', u') \notin I(y)$}{
				$I(y) \gets I(y) \cup \{(x', u')\}$\; \texttt{REC-UNOBS}($x', u', y$)\;
			}
		}
	}
	
	\vspace{0.6ex}
	\textbf{Procedure} \texttt{REC-OBS}($y$)\\
	$\iota \gets \{x \in X \mid (x, u) \in I(y)\}$;
	$E^{\mathrm{Rec}} \gets \emptyset$\;
	\For{$e \in \Theta(D(y))$}{
		$\iota^* \gets \{x \in X \mid \exists x' \in \iota, \, $s.t.$ \, x \in \alpha(x', e)\}$\;
		\If{$\phi(\iota^*, \Theta(\omega(D(y), e))) \subseteq X_S$}{
			$E^{\mathrm{Rec}} \gets E^{\mathrm{Rec}} \cup \{e\}$\;
		}
		$y^e \gets (\emptyset, \omega(D(y), e))$\;
	}
	\For{$(x, u) \in I(y)$}{
		\For{$e \in \Theta(D(y)):\alpha(x, e)!$}{
			\uIf{$u = \infty \wedge e \in E^{\mathrm{Rec}}$}{
				$u' \gets K$\;
			}
			\uElseIf{$e \notin E^{\mathrm{Rec}}$}{
				$u' \gets \infty$\;
			}
			\Else{
				$u' \gets u - 1$\;
			}
			\For{$x' \in \alpha(x, e)$}{
				\If{$(x', u') \notin I(y^e)$}{
					$I(y^e) \gets I(y^e) \cup \{(x', u')\}$\;
					\texttt{REC-UNOBS}($x', u', y^e$)\;
				}
			}
		}
	}
	\For{$e \in \Theta(D(y))$}{
		Add $\beta(y, e) = y^e$ to $\mathcal{O}_{\Omega}(G)$\;
		\If{$y^e \notin Y$}{
			$Y \gets Y \cup \{y^e\}$;
			\texttt{REC-OBS}($y^e$)\;
		}
	}
\end{algorithm}

The following deterministic observer structure is proposed
\begin{equation}\label{DO}
\mathcal{O}^K_{\Omega}(G) = (Y, E_o, \beta, y_{0}),
\end{equation}
where $ Y\in 2^{X \times (\mathbb{Z} \cup \{\infty\})}\times Q $ is the observer state space, with $ \mathbb{Z} $ the set of integers.
For notational convenience, let us decompose each $ y\in Y $ as $ y=(I(y),D(y)) $.
Namely, $ I(y)\in 2^{X \times (\mathbb{Z} \cup \{\infty\})} $ represents the state estimation set: for any $ (x, u) \in I(y) $, the system state $ x $ is assigned with a delay value $u$ counting the maximum number of steps before receiving the key events (i.e., the observable events allowing to distinguish the states in $ X_S $ from other states). 
Meanwhile, $ D(y)\in Q $ specifies the current sensing decision $ \Theta(D(y))\in2^{E_o} $.
For a state set $ \iota\subseteq X $ and a sensing decision $ \theta\subseteq E_o $, denote $
\phi(\iota, \theta)=\{x\in X \mid \exists x'\in \iota, s\in (E\setminus \theta)^*, \text{s.t.} \, \alpha(x',s)=x\},
$ representing the set of states that can be reached unobservably from a state in $ \iota $ under the sensing decision $ \theta $.
The transition function $ \beta: Y \times E_o \to Y $ and the initial state $ y_{0} $ in \eqref{DO} are constructed as in Algorithm \ref{AOS}.
Two recording procedures are proposed to handle the transitions involving unobservable and observable events, respectively.
Specifically, \texttt{REC-UNOBS}($x, u, y$) appends a delay value to each system state, storing the result in $ I(y) $. 
Meanwhile, \texttt{REC-OBS}($y$) identifies new transitions and observer states, where the delay values are appended to the next initial system states.

Note that for any $ (x,u) \in I(y) $, there always exists $ s\in\mathcal{L}(G): x\in\alpha(s) $ such that $ y=\beta(P_{\Omega}(s)) $.
If there exists another state $ (x',u') \in I(y) $, then there exists $ s'\in\mathcal{L}(G): x'\in\alpha(s') $ such that $ P_{\Omega}(s)=P_{\Omega}(s') $.
For each $ s\in\mathcal{L}(G): \alpha(s)\subseteq X_S $, the delay observer $\mathcal{O}^K_{\Omega}(G)$ records the delay of the events in $ E_o $ that help to distinguish $ s $ from the system trajectories $ s'\in\mathcal{L}(G): \alpha(s')\not\subseteq X_S $. 
A delay value ``$ 0 $'' indicates that the observation allowing to distinguish the states in $ X_S $ from other states has been received.
In line with existing observers \cite{cassandras2009introduction}, also the observer in Algorithm \ref{AOS} has worst-case complexity $ O(2^{2N}) $, where $ N $ is the number of system states.
\vspace*{-3ex}

\subsection{Verification of $ K $-delayed Opacity}\label{SV}

For the delay observer $ \mathcal{O}^K_{\Omega}(G) = (Y, E_o, \beta, y_{0}) $, define the verification function $ \psi : Y \rightarrow \{O,T\} $ as follows:
$\forall y \in Y, $
\vspace*{-0.6ex}
\begin{equation}\label{vf}
	\psi(y) = \left\{
	\begin{aligned}
	O , & \quad \text{if } 	\forall (x,u)\in I(y), u > 0; \\
	T , & \quad \text{otherwise},
	\end{aligned}
	\right.
\end{equation}
where $O$ stands for `opacity' and $T$ for `transparency'.
Verification of $ K $-delayed opacity is along the following result.
\begin{theorem}\label{TKDO}
	Given a system model $G$ in \eqref{S} with secret state set $X_S \subseteq X $, let $ \Omega $ be a sensor activation policy and $ \mathcal{O}_{\Omega}^K(G) $ be the delay observer built via Algorithm \ref{AOS}.
	Then, $ \mathcal{L}(G) $ is $ K $-delayed opaque w.r.t. $X_S$ and $ P_{\Omega} $ if and only if 
	\begin{equation}\label{iffdc}
	\forall y \in Y, \psi(y) = O.
	\end{equation}
\end{theorem}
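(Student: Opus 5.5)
The proof goes through an invariant that ties the delay values stored by Algorithm \ref{AOS} to the quantity appearing in Definition \ref{KO}. Both directions of Theorem \ref{TKDO} then follow by contraposition.

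For every observation $w=P_\Omega(s)$ with $s\in\matL(G)$, let $y=\beta(y_0,w)$. We state the invariant as follows:
\begin{itemize}
\item[(a)] $(x,u)\in I(y)$ holds iff there is a string $s$ with $P_\Omega(s)=w$, $x\in\alpha(s)$, and $u$ determined by $s$.
\item[(b)] $u=\infty$ exactly when, at the last observable event of $s$ (or at the start, if there is none), the unobservable reach of the current estimate under the current sensing decision is not contained in $X_S$. In that case there is a compatible non-secret continuation with the same observation.
\item[(c)] Otherwise $u=K-m$, where $m$ is the number of events executed since the most recent moment the estimate fell entirely into $X_S$ together with its unobservable reach. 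Call that moment the ``key observation'' (line 3 of the algorithm, or the assignment $u'\gets K$ for $e\in E^{\mathrm{Rec}}$). In the constant-$u$ branches of the algorithm, $m$ saturates at $0$.
\end{itemize}

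We prove (a)–(c) by induction on $|w|$ and, inside each observer state, by induction on the recursion depth of \texttt{REC-UNOBS}. The steps are:
\begin{itemize}
\item The base case is the initialization of $y_0$.
\item Unobservable steps (events in $E\setminus\Theta(D(y))$) decrement a finite positive $u$ and leave $\infty$ unchanged.
\item Observable steps in \texttt{REC-OBS} follow the three cases of the algorithm: a fresh key event resets $u$ to $K$, a non-key event sets $u$ to $\infty$, and a key event following an earlier key event decrements $u$.
\end{itemize}
The feasibility condition \eqref{FC2} guarantees that $D(y)=\omega(s)$ for every $s$ with $P_\Omega(s)=w$. Hence the sensing decision used in the algorithm is the true one, and $\phi(\iota^*,\Theta(\omega(D(y),e)))$ is exactly the set of states consistent with the observation $we$.

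Next we connect the invariant to $\zeta^K_\Omega$. Take $s$ with $\alpha(s)\subseteq X_S$ and $|s|\ge K$. Condition \eqref{KDOC} fails iff every observation $P_\Omega(s')$ with $s'\in\overline{s}$ and $|s'|\ge |s|-K$ already certifies secrecy, i.e.\ all states consistent with it lie in $X_S$. This happens iff the key observation occurred at least $K$ steps before the end of $s$, and, by (c), iff some pair $(x,u)\in I(\beta(P_\Omega(s)))$ reached along $s$ has $u\le 0$.

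Both directions now follow:
\begin{itemize}
\item \emph{Sufficiency.} If $\mathcal{L}(G)$ is not $K$-delayed opaque, the violating $s$ produces a pair with $u\le0$ in some reachable $y$, so $\psi(y)=T$.
\item \emph{Necessity.} Suppose some reachable $y$ has $\psi(y)=T$. By (a) there is a string $s$ witnessing a pair with $u\le0$. By (c) at least $K$ events have occurred since the key observation, so $|s|\ge K$, and from that point on every delayed observation certifies secrecy. Liveness of $\matL(G)$ and the fact that the estimate stays inside $X_S$ until a non-key event resets $u$ to $\infty$ give $\alpha(s)\subseteq X_S$. Hence $s$ violates \eqref{KDOC}.
\end{itemize}

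The main obstacle is making (c) precise. One difficulty is that a single pair $(x,u)$ summarizes a whole set of strings, and these strings may have different key-observation times. I would define the invariant with respect to the ``maximal remaining delay'' along each individual string, and use the fact that \texttt{REC-UNOBS} inserts every pair $(x',u')$ arising from some path. A second difficulty is that ``the estimate is entirely secret'' must be shown to persist across the key events that do not reset $u$ (the branch $u'\gets u-1$). This requires checking that $E^{\mathrm{Rec}}$ membership at each subsequent observation keeps the certified-secret status. That check is the step I expect to need the most care.
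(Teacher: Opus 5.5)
Your proposal is correct and follows essentially the same route as the paper. Both arguments rest on two facts: a delay value is $\infty$ exactly when the current observation is compatible with a non-secret string, and otherwise it drops by one per event after the last key observation. You make these facts explicit as the invariant (a)--(c) and argue both directions by contraposition, whereas the paper argues directly through a prefix $s_\infty$ carrying an $\infty$ entry within $K$ steps of the end of $s$.

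The step you expect to be hardest is in fact immediate. The branch $u'\gets u-1$ is taken only when $e\in E^{\mathrm{Rec}}$, i.e., when the new estimate already lies in $X_S$, and this also gives $\alpha(s)\subseteq X_S$ without any appeal to liveness. Your base case for (b) uses $\phi(X_0,\Theta(q_0))$, which is the intended reading of line~2 of Algorithm~\ref{AOS}. Taken literally, that line reads $\phi(X_0,E_o)$, and with it the necessity direction would fail, so the correction is worth stating explicitly.
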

\vspace{1ex}

\begin{proof}
	(\underline{Sufficiency}):
	Suppose that \eqref{iffdc} holds.
	Then, for any $ s\in \mathcal{L}(G) $ such that $ |s|\geq K $, $ \psi(\beta(P_{\Omega}(s)))=O $, that is, $ \forall (x,u)\in\beta(P_\Omega(s)), u>0 $.
	We claim that there exists $ s_\infty\in \bar{s}: |s|-|s_\infty|\leq K $ such that
	\begin{equation}\label{infty}
	\exists (x,u)\in I(\beta(P_\Omega(s_\infty))), \text{ s.t. } u=\infty.
	\end{equation}
	Suppose on the contrary that such a string $ s_\infty $ does not exist.
	Then, $ \forall t\in \bar{s}:|s|-|t|\leq K $,
	\begin{equation*}
	\forall (x,u)\in \beta(P_\Omega(t)), u\leq K.
	\end{equation*}
	According to the recording strategy in Algorithm \ref{AOS} (cf. lines 11-14 and lines 27-32), the delay value $ u<\infty $ will be updated to $ u-1 $ at each step and will become $ 0 $ within $ K $ steps, resulting in a contradiction.\\
	Note that the existence of $ s_\infty\in \bar{s} $ satisfying \eqref{infty} implies that there exists a string $ s'\in\matL(G) $ such that $ P_{\Omega}(s') = P_{\Omega}(s_\infty) \wedge \alpha(s')\not\subseteq X_S $ (cf. lines 19-23 and lines 29-30).
	By $ |s|-|s_\infty|\leq K $, we have $ s_\infty\in \zeta_{\Omega}^{K}(s) $, implying $ P_{\Omega}(s')=P_{\Omega}(s_\infty) \in P_{E_o}(\zeta^K_{\Omega}(s)) $.
	Hence, \eqref{KDOC} holds for any $ s\in\matL(G):|s|\geq K $.
	We conclude that $ \mathcal{L}(G) $ is $ K $-delayed opaque w.r.t. $X_S$ and $ P_{E_o} $.
	\\
	(\underline{Necessity}):
	Suppose that $ \mathcal{L}(G) $ is $ K $-delayed opaque w.r.t. $X_S$ and $ P_{\Omega} $.
	We will show that for any $ s\in \matL(G) $, $ x\in \alpha(s) $, the delay value of $ x $ will always be greater than $ 0 $.\\
	We first consider the set of string $ \{s \in \mathcal{L}(G) \mid |s|< K\} $.
	In Algorithm \ref{AOS}, the delay value of each initial state is $ \infty $ or $ K $.
	According to the strategy of delay record (cf. lines 11-14 and lines 27-32), the delay value of each state $ x\in\alpha(s) $ will remain greater than $ 0 $ within $ K-1 $ steps.\\
	We now show that for any $ s_\infty \in \mathcal{L}(G) $, if \eqref{OC} holds for $ s=s_\infty $, then the delay value of any $ x\in\alpha(s_\infty) $ is $ \infty $.
	Denote $ s_\infty=s_0e_0s_1 $ such that $ P_{\Omega}(s_0e_0)=P_{\Omega}(s_\infty) $ and $ e_0\in \Theta(\omega(s_0)) $.
	According to the recording strategy for state estimation set not contained in $ X_S $ (cf. lines 19-23 and lines 29-30), the delay value of any $ x\in\alpha(s_0e_0) $ is $ \infty $.
	Since $ P_{\Omega}(s_0e_0)=P_{\Omega}(s_\infty) $, we have that the delay value of any $ x\in\alpha(s_\infty) $ remains $ \infty $ (cf. lines 9-14).
	Hence, for any $ s_\infty \in \mathcal{L}(G) $ such that \eqref{OC} holds with $ s=s_\infty $, the delay value of the states in $ \alpha(s_\infty) $ is $ \infty $.\\
	Finally, since $ \mathcal{L}(G) $ is $ K $-delayed opaque w.r.t. $X_S$ and $ P_{\Omega} $, \eqref{KDOC} holds for any $ s \in \mathcal{L}(G): |s|\geq K \wedge \alpha(s)\subseteq X_S $, implying that there exists $ s_\infty\in \bar{s}: |s|-|s_\infty|\leq K $ such that \eqref{OC} holds for $ s=s_\infty $.
	Then, the delay value of any state $ x_\infty\in\alpha(s_\infty) $ is $ \infty $.
	According to the strategy of delay record (cf. lines 11-14 and lines 27-32), the delay value of the states in $ \alpha(s) $ will remain greater than $ 0 $ starting from $ (x_\infty,\infty) $ within $ K $ steps.\\
	Hence, for any $ y\in Y $ and $ (x,u)\in y $, we have $ u>0 $, implying $ \psi(y) = O $, i.e., \eqref{iffdc} holds.
\end{proof}

\begin{example}\label{EKDO}
	Let us introduce delay $ K_1=1 $ or $ K_2=2 $ in the observations of system $G$ in Fig. \ref{fs}. 
	Algorithm \ref{AOS} returns the delay observers $ \mathcal{O}_{E_o}^{K_1}(G) $ and $ \mathcal{O}_{E_o}^{K_2}(G) $ shown in Fig. \ref{fdo1} and Fig. \ref{fdo2}, where the delays of the key transition $ (1,e_2,2) $ are recorded.
	Using \eqref{vf} in Theorem \ref{TKDO}, we have $ \psi(\{(2,1),(3,0)\})=T $ for $ \mathcal{O}_\Omega^{K_1}(G) $ and $ \psi(y)=O, \forall y \in Y, $ for $ \mathcal{O}_\Omega^{K_2}(G) $.
	Hence, $ \mathcal{L}(G) $ is not $ K $-delayed opaque with $ K = 1 $, but it is $ K $-delayed opaque with $ K = 2 $ w.r.t. $ X_S $ and $ P_{\Omega} $.
	Hence, system $G$ originally lacking opacity (cf. Example \ref{E}) exhibits $ K $-delayed opacity with $K=2$.
	\hfill \ensuremath{\Box}
\end{example}

\section{Synthesis and Optimization of Sensor Activation Policy}
\label{optimize}


Instead of a given sensor activation policy, it might be desirable to design the sensor activation policy, e.g., balancing between acquiring information and preserving secrecy.
In this section, we aim to design a policy that monitors as many events as possible while guaranteeing opacity of the system. 
To formalize this objective, for two sensor activation policies $ \Omega_1=(R_1,\Theta_1),\Omega_2=(R_2,\Theta_2) $, we write $ \Omega_1 \subseteq \Omega_2 $ if $ \forall s\in E^*, \Theta_1(\omega_1(s)) \subseteq \Theta_2(\omega_2(s)) $, and $ \Omega_1 \subset \Omega_2 $ if $ \Omega_1 \neq \Omega_2 $. 
We are now in a position to formulate the problem.

\begin{problem}\label{P}
	Given a system model $G$ in \eqref{S} and a delay $K$, let $X_S\subseteq X$ be the set of secret states.
	Find a sensor activation policy $ \bar{\Omega} $ such that
	\begin{itemize}
		\item[i)] $ \mathcal{L}(G) $ is $ K $-delayed opaque w.r.t. $ X_S $ and $ P_{\bar{\Omega}} $;
		\item[ii)] there exists no other policy $ \Omega $ such that $ \bar{\Omega}\subset\Omega $ and $ \mathcal{L}(G) $ is $ K $-delayed opaque w.r.t. $ X_S $ and $ P_{\Omega} $.
	\end{itemize}
\end{problem}

To solve Problem \ref{P}, we leverage the verification function in \eqref{vf}, extending the delay observer structure to facilitate the application of Theorem \ref{TKDO} throughout the optimization process.
For the system model $ G $ in \eqref{S} and the verification function $ \psi $, let us define the following container automaton
\begin{equation}\label{container}
\mathcal{O}_{\psi}^K(G) = (Y_{\psi}, E_o, \beta_{\psi}, Y_{\psi,0}),
\end{equation}
with state space $ Y_{\psi} \in 2^{X \times (\mathbb{Z} \cup \{\infty\})}\times 2^{E_o} $.  
Similar to \eqref{DO}, let us decompose each $ \bar{y}\in Y_\psi $ as $ \bar{y}=(I(\bar{y}),D_\psi(\bar{y})) $, where $ I(\bar{y})\in 2^{X \times (\mathbb{Z} \cup \{\infty\})} $ represents the state estimation set with delay value, and $ D_\psi(\bar{y})\in 2^{E_o} $ represents the current sensing decision.
The transition function $ \beta_{\psi}: Y_{\psi}\times E_o \rightarrow 2^{Y_{\psi}} $ and the set of initial states $ Y_{\psi,0} $ in \eqref{container} are constructed in Algorithm \ref{AO}.
We stress that, instead of a sensor activation policy, the new structure is based on the verification function $ \psi $, that is, 
\begin{equation}\label{Opsi}
\forall \bar{y} \in Y_{\psi}, \psi(\bar{y}) = O,
\end{equation}
where the domain of the verification function $ \psi $ is extended to include $ Y_\psi $ with a slight abuse of notation.
Note that \eqref{Opsi} is reflected in lines 9-11 and lines 39-42 of Algorithm \ref{AO}.
While \texttt{REC-UNOBS} is as in Algorithm \ref{AOS}, a new procedure \texttt{REC-OBS$ _\psi $} is proposed to traverse the state space and record the delays until state $ \bar{y} \in Y_{\psi} $ is such that $ \psi(\bar{y}) = T $.
The final step of Algorithm \ref{AO} (cf. lines 12-13) removes states where no feasible sensing decision exists to enable further transitions.

\begin{algorithm}[!t]
	\caption{Construction of $\mathcal{O}^K_{\psi}(G)$}
	\label{AO}
	\KwIn{$G = (X, E, \alpha, X_{0}), \psi, K, E_o, X_S$}
	\KwOut{$\mathcal{O}^K_{\psi}(G) = (Y_{\psi}, E_o, \beta_{\psi}, Y_{\psi,0})$}
	
	$Y_{\psi,0} \gets \emptyset$;
	$Y_{\psi} \gets \emptyset$\;
	
	\For{$\theta \in 2^{E_o}$}{
		$\bar{y}_0^\theta \gets (\emptyset, \theta)$; $u_0 \gets \infty$\;
		\If{$\phi(X_0,\theta)\subseteq X_S$}{
			$u_0 \gets K$\;
		}
		\For{$x_0 \in X_0$}{
			$I(\bar{y}_0^\theta) \gets I(\bar{y}_0^\theta) \cup \{(x_0, u_0)\}$\;
			\texttt{REC-UNOBS}($x_0, u_0, \bar{y}_0^\theta$)\;
		}
		\If{$\psi(\bar{y}_0^\theta) = O$}{
			$Y_{\psi,0} \gets Y_{\psi,0} \cup \{\bar{y}_0^\theta\}$;
			$Y_{\psi} \gets Y_{\psi} \cup \{\bar{y}_0^\theta\}$\;
			\texttt{REC-OBS}$_\psi$($\bar{y}_0^\theta$)\;
		}
	}
	
	\While{$ \exists \bar{y}\in Y_{\psi}: (x,u)\in I(\bar{y}), e \in D_\psi(y),\, $s.t. [$ \alpha(x,e)! $ and $ \beta_{\psi}(\bar{y},e) $ is undefined]}{
		$Y_{\psi} \gets Y_{\psi} \!\setminus\! \{\bar{y}\}$;
		Take the accessible part of $\mathcal{O}^K_{\psi}(G)$\;
	}
	\textbf{Return} $\mathcal{O}^K_{\psi}(G)$\;
	
	\vspace{0.6ex}
	\textbf{Procedure} \texttt{REC-OBS}$_\psi$($\bar{y}$)\\
	$\iota \gets \{\,x \in X \mid (x,u)\in I(\bar{y})\}$\;
	\For{$\theta \in 2^{E_o}$}{
		$E^{\rm Rec}_{\theta} \gets \emptyset$\;
	}
	\For{$e \in D_{\psi}(\bar{y})$}{
		$\iota^* \gets \{\,x\in X \mid \exists x'\in \iota,\; x\in\alpha(x',e)\}$\;
		\For{$\theta \in 2^{E_o}$}{
			\If{$\phi(\iota^*, \theta)\subseteq X_S$}{
				$E^{\rm Rec}_{\theta} \gets E^{\rm Rec}_{\theta} \cup \{e\}$\;
			}
			$\bar{y}^{e,\theta} \gets (\emptyset, \theta)$\;
		}
	}
	
	\For{$(x,u)\in I(\bar{y})$}{
		\For{$ e\in D_\psi(\bar{y}):\alpha(x,e)! $}{
			\For{$\theta \in 2^{E_o}$}{
				\If{$u = \infty \wedge e \in E^{\rm Rec}_{\theta}$}{
					$u^{\theta} \gets K$\;
				}
				\ElseIf{$e \notin E^{\rm Rec}_{\theta}$}{
					$u^{\theta} \gets \infty$\;
				}
				\Else{
					$u^{\theta} \gets u - 1$\;
				}
				\For{$x' \in \alpha(x,e)$}{
					\If{$(x', u^{\theta}) \notin I(\bar{y}^{e,\theta})$}{
						$I(\bar{y}^{e,\theta}) \gets I(\bar{y}^{e,\theta}) \cup \{(x', u^{\theta})\}$\;
						\texttt{REC-UNOBS}($x', u^{\theta}, \bar{y}^{e,\theta}$)\;
					}
				}
			}
		}
	}
	\For{$e \in D_{\psi}(\bar{y})$}{
		\For{$ \theta \in 2^{E_o}: \psi(\bar{y}^{e,\theta})=O $}{
			Add $\beta_{\psi}(\bar{y}, e) = \bar{y}^{e,\theta}$ to $\mathcal{O}^K_{\psi}(G)$\;
			\If{$\bar{y}^{e,\theta} \notin Y_{\psi}$}{
				$Y_{\psi} \gets Y_{\psi} \cup \{\bar{y}^{e,\theta}\}$;
				\texttt{REC-OBS}$_\psi$($\bar{y}^{e,\theta}$)\;
			}
		}
	}
\end{algorithm}

Note that $ \mathcal{O}_{\psi}^K(G) $ is a nondeterministic automaton.
A deterministic automaton contained in $ \mathcal{O}_{\psi}^K(G) $ can be obtained by choosing one initial state and by making one sensing decision at each transition.
To formalize the reasoning above, we now establish that $ \mathcal{O}_{\psi}^K(G) $ collects all possible sensor activation policies guaranteeing $K$-delayed opacity.
\begin{definition}\label{contain}
	Given a sensor activation policy $ \Omega $ and a verification function $\psi$, the delay observer $ \mathcal{O}^K_{\Omega}(G) $ is said to be contained in $ \mathcal{O}^K_{\psi}(G) $ 
	if $ \forall s\in \matL(G) $, 
	\begin{equation}\label{ccontain}
	(I(\beta(P_\Omega(s))),\Theta(\omega(s))) \in \beta_{\psi}(P_\Omega(s)).
	\end{equation}
\end{definition} 
\vspace{1ex}

\begin{lemma}\label{LDO}
	The delay observer $ \mathcal{O}^K_{\Omega}(G) $ is contained in $ \mathcal{O}^K_{\psi}(G) $ if and only if \eqref{iffdc} holds.
\end{lemma}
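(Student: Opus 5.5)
The plan is to argue by induction on the length of the observation $P_\Omega(s)$, comparing the run of Algorithm \ref{AOS} under $\Omega$ with the run of Algorithm \ref{AO}. The key invariant is a \emph{simulation} statement. Take any observation $\sigma=P_\Omega(s)$ such that every observer state $\beta(\sigma')$ visited along the prefixes $\sigma'\in\overline{\sigma}$ satisfies $\psi=O$. Then the pair $(I(\beta(\sigma)),\Theta(D(\beta(\sigma))))$ is produced by Algorithm \ref{AO} as an element of $\beta_\psi(\sigma)$, before the pruning step of lines 12--13. Here $\beta_\psi(\sigma)$ denotes the set of container states reached by $\sigma$ from $Y_{\psi,0}$. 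Note also that $\Theta(D(\beta(P_\Omega(s))))=\Theta(\omega(s))$ by the feasibility condition \eqref{FC2}.

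\textbf{Base case.} Algorithm \ref{AO}, line 3, enumerates every $\theta\in 2^{E_o}$. For $\theta=\Theta(q_0)$, lines 3--8 coincide verbatim with lines 1--5 of Algorithm \ref{AOS}: they use the same test $\phi(X_0,\theta)\subseteq X_S$ and the same call to \texttt{REC-UNOBS}. Hence $\bar y_0^{\theta}=(I(y_0),\Theta(q_0))$.

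\textbf{Inductive step.} For an observed $e\in\Theta(D(y))$, choose in \texttt{REC-OBS}$_\psi$ the branch $\theta=\Theta(\omega(D(y),e))$. Then $E^{\rm Rec}_\theta$ equals the set $E^{\rm Rec}$ computed in \texttt{REC-OBS}, and the delay updates (lines 27--32 versus lines 27--34 of Algorithm \ref{AO}) agree case by case. It follows that $I(\bar y^{e,\theta})=I(y^e)$. The transition is added exactly when $\psi(\bar y^{e,\theta})=O$ (lines 39--42), which is where the hypothesis $\psi=O$ enters.

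\textbf{Sufficiency.} If \eqref{iffdc} holds, every reachable $y$ has $\psi(y)=O$, so the simulation runs through all of $P_\Omega(\matL(G))$. It remains to show that pruning (lines 12--13) never deletes a simulated state. The simulated states form a set closed under every enabled transition: for each $(x,u)\in I(y)$ and $e\in\Theta(D(y))$ with $\alpha(x,e)!$, the successor $y^e$ is again simulated. So none of them satisfies the removal condition. By induction on the pruning iterations, no simulated state is ever removed, since removal of a non-simulated state cannot make a simulated one lose its successor. Accessibility is preserved because the simulated states are reached from $\bar y_0^{\Theta(q_0)}$. This yields \eqref{ccontain} for all $s$.

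\textbf{Necessity.} If the observer is contained, then for every $s$ the pair $(I(\beta(P_\Omega(s))),\Theta(\omega(s)))$ belongs to $Y_\psi$. Every element of $Y_\psi$ satisfies $\psi=O$ by \eqref{Opsi}, which is enforced at lines 9 and 40. Since every $y\in Y$ equals $\beta(P_\Omega(s))$ for some $s$, and $\psi$ depends only on $I(\cdot)$, \eqref{iffdc} follows.

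\textbf{Main obstacle.} I expect the main difficulty to be the bookkeeping showing that the two algorithms compute identical $I$-sets: $E^{\rm Rec}_\theta$ must match $E^{\rm Rec}$ when $\theta$ equals the next sensing decision, and the recursive \texttt{REC-UNOBS} closure must depend only on $I$ and $\theta$. A second point to handle is the pruning argument, where one must check the closure property carefully.
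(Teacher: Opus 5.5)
Your proof is correct and follows the paper's own route. Necessity is read off from \eqref{Opsi}. Sufficiency goes by induction along observations: you select in \texttt{REC-OBS}$_\psi$ the branch $\theta=\Theta(\omega(s'e))$, so that $I(\bar y^{e,\theta})=I(\beta(P_\Omega(s'e)))$, and \eqref{iffdc} keeps that transition. Your closure-plus-induction argument for the pruning loop is in fact more careful than the paper's one-line claim that $\bar y$ ``will not be eliminated.''

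Two precision points. First, what actually holds is $e\in E^{\rm Rec}_\theta\Leftrightarrow e\in E^{\rm Rec}$ for the current $e$, not equality of the two sets, and that is all the delay update needs. Second, Algorithm \ref{AOS} as printed tests $\phi(X_0,E_o)\subseteq X_S$ rather than $\phi(X_0,\Theta(q_0))\subseteq X_S$, so your ``verbatim'' base case relies on the evidently intended reading $\phi(X_0,\Theta(q_0))$. The paper's proof makes the same identification silently, and under the literal text the initial $I$-sets can differ.
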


\begin{proof}
	(\underline{Necessity}):
	Suppose that $ \mathcal{O}^K_{\Omega}(G) $ is contained in $ \mathcal{O}^K_{\psi}(G) $.
	Then, \eqref{ccontain} holds for any $ s\in \matL(G) $.
	By \eqref{vf} and \eqref{Opsi}, we have $ \psi(\beta(P_\Omega(s))) = O $, implying that \eqref{iffdc} holds.
	
	(\underline{Sufficiency}):
	Suppose that \eqref{iffdc} holds.
	Since $ \psi(y_0)=O $, there exists $ \bar{y}_0\in Y_{\psi,0} $ such that $ I(y_0)=I(\bar{y}_0) $ and $ \Theta(D(y_0)) = D_\psi(\bar{y}_0) $ (cf. lines 1-6 of Algorithm \ref{AOS} and lines 2-11 of Algorithm \ref{AO}).
	Suppose that \eqref{ccontain} holds for a string $ s=s'\in\matL(G) $.
	We now consider an $ e\in E $ such that $ \alpha(s'e)! $.
	In case $ e\notin \Theta(\omega(s')) $, we directly have that \eqref{ccontain} holds for $ s=s'e $ since $ P_\Omega(s')=P_\Omega(s'e) $ and $ \omega(s')=\omega(s'e) $.
	In case $ e\in \Theta(\omega(s')) $, let us denote $ \bar{y}=(I(\beta(P_\Omega(s'))),\Theta(\omega(s'))) $ and evaluate \texttt{REC-OBS}$ _\psi $($ \bar{y} $) in Algorithm \ref{AO}.
	First, lines 16-24 return $ (\emptyset,\Theta(\omega(s'e))) $.
	Then, lines 25-37 return $ \bar{y}^e=(I(\beta(P_\Omega(s'e))),\Theta(\omega(s'e))) $ based on $ \Theta(\omega(s'e)) $ (cf. lines 25-36 of Algorithm \ref{AOS}).
	By \eqref{vf}, since $ \psi(\beta(P_{\Omega}(s')))=O $, we have $ \psi(\bar{y}^e)=O $, implying that $ \bar{y}^e $ is added in $ Y_\psi $ (cf. lines 38-42).
Since $ \bar{y}^e\in \beta_\psi(\bar{y},e) $, $ \bar{y} $ will not be eliminated (cf. lines 13-14).
Hence, \eqref{ccontain} holds for $ s'e $.
	By induction, we conclude that $ \mathcal{O}^K_{\Omega}(G) $ is contained in $ \mathcal{O}^K_{\psi}(G) $.
\end{proof}

To find an optimal policy $ \bar{\Omega} $ based on $ \mathcal{O}^K_{\psi}(G) $, Algorithm \ref{APO} applies a greedy strategy  at each transition in $ \mathcal{O}^K_{\psi}(G) $ to determine the sensing decision.
The idea is to maximize the monitored events while guaranteeing $ K $-delayed opacity.
The following result shows that this greedy strategy returns a policy that solves Problem \ref{P}.

\begin{algorithm} [t]
	\caption{Sensor Activation Policy Optimization} \label{APO}
	\KwIn{$ \mathcal{O}_{\psi}^K(G) $;}
	\KwOut{$ \bar{\Omega}=(R,\Theta) $ with $ R=(Q,E,\omega,q_0) $;}
	
	Find $ \bar{y}_0 \in  Y_{\psi,0} $ satifying $ \forall \bar{y}_0' \in Y_{\psi,0}, D_{\psi}(\bar{y}_0') \not\subset D_{\psi}(\bar{y}_0) $\;
	$ i\gets0 $; $ \psi_Q(\bar{y}_0)\gets i $; $ \Theta(i)\gets D_\psi(\bar{y}_0) $; $ q_0 \gets i $\;
	\texttt{ADD}($ \bar{y}_0, \bar{\Omega}, \mathcal{O}_{\psi}^K(G) $)\;
	\For {$ q\in Q $}{
		\For {$ e\in E\backslash\{e\in E\mid \omega(q,e)! \} $}{
			Add $ \omega(q,e)=q $ to $ R $\;
		}
	}
	\textbf{Return} $ \bar{\Omega} $\;
	\vspace{0.6ex}
	\textbf{Procedure} \texttt{ADD}($\bar{y}, {\bar{\Omega}}, \mathcal{O}_{\psi}^K(G)$)\;
	\For{$ {e\in E_o: \beta_{\psi}(\bar{y},e)!} $}{
		Find $ \bar{y}'\in \beta_{\psi}(\bar{y},e) $ satisfying that $\forall \bar{y}''\in \beta_{\psi}(\bar{y},e), D_\psi(\bar{y}') \not\subset D_\psi(\bar{y}'') $\;
		\If{$ {\psi_Q(\bar{y}')\in\mathbb{N}} $}{
			Add $\omega(\psi_Q(\bar{y}),e) = \psi_Q(\bar{y}')$ to $R$\;
		}
		\Else{
			$ i \gets i+1 $; $ \psi_Q(\bar{y}')\gets i $; 
			$ \Theta(i)\gets D_\psi(\bar{y}') $\;
			$ Q \gets Q \cup \{i\} $; Add $ \omega(\psi_Q(\bar{y}),e)=i $ to $ R $\;
			\texttt{ADD}($ \bar{y}', {\bar{\Omega}}, \mathcal{O}_{\psi}^K(G) $)\;
		}
	}
\end{algorithm}

\begin{theorem}\label{TS}
	Algorithm \ref{AO} and Algorithm \ref{APO} solve Problem \ref{P}.
\end{theorem}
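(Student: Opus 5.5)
The proof splits according to the two requirements of Problem \ref{P}.

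\emph{Condition i).} First I would check that the policy $\bar{\Omega}=(R,\Theta)$ returned by Algorithm \ref{APO} is well defined and feasible. Algorithm \ref{APO} creates a non-self-loop transition $\omega(q,e)=q'$ only when $e\in D_\psi(\bar{y})=\Theta(q)$ and $\beta_\psi(\bar{y},e)!$. Every other event is completed as a self-loop (lines 4--6), so $\matL(R)=E^*$ and \eqref{FC} holds.

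Next I would show, by induction on $s\in\matL(G)$, that
\[
(I(\beta(P_{\bar{\Omega}}(s))),\Theta(\omega(s)))=\bar{y}_s
\]
for the state $\bar{y}_s\in Y_\psi$ visited by \texttt{ADD} along $P_{\bar{\Omega}}(s)$.
\begin{itemize}
\item Base case: $\bar{y}_0$ is built from $D_\psi(\bar{y}_0)=\Theta(q_0)$ by exactly lines 1--6 of Algorithm \ref{AOS}.
\item Unobservable step: the pair is unchanged.
\item Observable step $e\in\Theta(\omega(s))$: $\alpha(x,e)!$ for some $x$ in the estimate. The pruning loop (lines 12--13 of Algorithm \ref{AO}) guarantees that $\beta_\psi(\bar{y}_s,e)$ is defined and nonempty, so \texttt{ADD} selects some $\bar{y}'$. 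The \texttt{REC-OBS}$_\psi$ computation for $\theta=D_\psi(\bar{y}')$ coincides line by line with \texttt{REC-OBS} of Algorithm \ref{AOS} with $\Theta(\omega(se))=\theta$. Hence $\bar{y}'$ equals the observer pair after $se$.
\end{itemize}
Consequently $\mathcal{O}^K_{\bar{\Omega}}(G)$ is contained in $\mathcal{O}^K_\psi(G)$. By Lemma \ref{LDO}, \eqref{iffdc} holds, and by Theorem \ref{TKDO}, $\matL(G)$ is $K$-delayed opaque w.r.t. $X_S$ and $P_{\bar{\Omega}}$.

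\emph{Condition ii).} Suppose some $\Omega$ with $\bar{\Omega}\subset\Omega$ makes $\matL(G)$ $K$-delayed opaque. By Theorem \ref{TKDO} and Lemma \ref{LDO}, $\mathcal{O}^K_\Omega(G)$ is contained in $\mathcal{O}^K_\psi(G)$. Take a shortest $s$ with $\Theta(\omega(s))\neq\bar{\Theta}(\bar{\omega}(s))$; by inclusion the former strictly contains the latter.

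For every strict prefix of $s$ the two policies make identical decisions. Hence they generate the same projections and the same estimation sets along $s$ up to the last observation. In particular, at the last observable event $e$ of $s$ (or at the initial state if there is none), both pairs are successors of the same $\bar{y}$ under $e$ in $\mathcal{O}^K_\psi(G)$; for the initial case, both lie in $Y_{\psi,0}$. Thus both $(I,\Theta(\omega(s)))$ and $\bar{y}'=(I',\bar{\Theta}(\bar{\omega}(s)))$ belong to $\beta_\psi(\bar{y},e)$, with $\bar{\Theta}(\bar{\omega}(s))\subset\Theta(\omega(s))$. This contradicts the selection rule of line 9 of Algorithm \ref{APO} (or of line 1 in the initial case), which picks a $\bar{y}'$ whose decision is not strictly contained in that of any other successor.

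\emph{Main obstacle.} The delicate point is making the selection rule match the minimal difference exactly. The greedy choice is a $\subset$-maximal element among successors that carry different estimation sets $I$. The contradiction needs the larger decision to appear as a successor of the \emph{same} container state $\bar{y}$, which holds by the minimality of $s$. It also relies on state reuse via $\psi_Q$ not merging histories inconsistently. I would handle this by noting that $\psi_Q$ is a function of $\bar{y}$, so the decision at $q$ depends only on the container state.
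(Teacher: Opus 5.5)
Your proposal is correct and takes essentially the paper's route. For i), you use containment of $\mathcal{O}^K_{\bar{\Omega}}(G)$ in $\mathcal{O}^K_{\psi}(G)$ together with Lemma \ref{LDO} and Theorem \ref{TKDO}; for ii), you derive a contradiction at the first point where the decisions differ, using the greedy selection rule of Algorithm \ref{APO}. Your feasibility check and explicit containment induction are more careful than the paper's one-line assertion that the policy is built from container states.

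The one step you leave out is the paper's opening argument that $Y_{\psi,0}\neq\emptyset$, so Algorithm \ref{APO} actually returns a policy whenever Problem \ref{P} is solvable. Your base case silently assumes this. It follows by applying Theorem \ref{TKDO} and Lemma \ref{LDO} to any solution.
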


\begin{proof}
First, we show that Algorithm \ref{APO} will produce a non-empty output whenever Problem \ref{P} admits a solution.
Suppose that Problem \ref{P} has a solution $ {\bar{\Omega}} $.
Then, $ \mathcal{O}^K_{\bar{\Omega}}(G) $ is contained in $ \mathcal{O}^K_{\psi}(G) $ by Theorem \ref{TKDO} and Lemma \ref{LDO} since $ \mathcal{L}(G) $ is $ K $-delayed opaque w.r.t. $X_S$ and $ P_{\bar{\Omega}} $.
Hence, Algorithm \ref{AO} will produce a non-empty $ \mathcal{O}^K_{\psi}(G) $, which guarantees that Algorithm~\ref{APO} will produce a non-empty output, as $ {\bar{\Omega}} $ can be constructed directly from the states in $ \mathcal{O}^K_{\psi}(G) $ (cf. lines 1 and 10 of Algorithm \ref{APO}).

Let $ {\bar{\Omega}} $ be the output of Algorithm \ref{APO}.
Next, we show that such $ {\bar{\Omega}} $ indeed constitutes a solution to Problem~\ref{P}.
\\
	We first check condition i) in Problem \ref{P}.
	Since $ {\bar{\Omega}} $ is built directly with corresponding states in $ \mathcal{O}^K_{\psi}(G) $ (cf. lines 1-2 and lines 10-15 of Algorithm \ref{APO}), we have that $ \mathcal{O}^K_{\bar{\Omega}}(G) $ is contained in $ \mathcal{O}^K_{\psi}(G) $.
	Then, by Theorem \ref{TKDO} and Lemma \ref{LDO}, $ \mathcal{L}(G) $ is $ K $-delayed opaque w.r.t. $X_S$ and $ P_{\bar{\Omega}} $.
	\\
We then check condition ii) in Problem \ref{P}.
	By contradiction, suppose there exists $ \Omega'=(R',\Theta') $ with $ R'=(Q',E,\omega',q_0')  $ such that $ {\bar{\Omega}} \subset \Omega' $ and $ \mathcal{L}(G) $ is $ K $-delayed opaque w.r.t. $X_S$ and $ P_{\Omega'} $.
	Then, by Theorem \ref{TKDO} and Lemma \ref{LDO}, $ \mathcal{O}^K_{\Omega'}(G) = (Y', E_o, \beta', y_{0}') $ is contained in $ \mathcal{O}^K_{\psi}(G) $.
	Since $ {\bar{\Omega}} \subset \Omega' $, there exist two cases: $ \Theta(\omega(\epsilon)) \subset \Theta'(\omega'(\epsilon)) $ or $ \Theta(\omega(\epsilon)) = \Theta'(\omega'(\epsilon)) $.
	In the first case, $ \Theta(\omega(\epsilon)) $ cannot be selected in line 1, contradicting the fact that $ {\bar{\Omega}} $ is the output of Algorithm \ref{APO}.
	In the second case, there exists $ s\in\mathcal{L}(G)$ such that $ \Theta(\omega(se)) \subset \Theta'(\omega'(se)) $ and for any $ s' \in \bar{s},\ \Theta(\omega(s')) = \Theta'(\omega'(s')) $.
	Denote $ \bar{y}\!=\!(I(\beta(P_{\bar{\Omega}}(s))),\Theta(\omega(s)))\!=\!(I(\beta'(P_{\Omega'}(s))),\Theta'(\omega'(s))) $.
	Since $ \mathcal{O}^K_{\bar{\Omega}}(G) $ and $ \mathcal{O}^K_{\Omega'}(G) $ are contained in $ \mathcal{O}^K_{\psi}(G) $, we have $ \bar{y} \in \beta_{\psi}(P_\Omega(s)) $.
	Then, since $ \Theta(\omega(se)) \subset \Theta'(\omega'(se)) $, we have that $ (I(\beta(P_{\bar{\Omega}}(se))),\Theta(\omega(se))) \in \beta_{\psi}(\bar{y},e) $ cannot be selected in line 10, contradicting the fact that $ {\bar{\Omega}} $ is the output of Algorithm \ref{APO}.
	Hence, there exists no other policy $ \Omega $ such that $ {\bar{\Omega}}\subset\Omega $ and $ \mathcal{L}(G) $ is $ K $-delayed opaque w.r.t. $ X_S $ and $ P_{\Omega} $.
	We conclude that $ {\bar{\Omega}} $ is a solution to Problem \ref{P}.
\end{proof}

\begin{remark}
	(Non-uniqueness of solution).
	Note that Problem \ref{P} does not exclude the existence of more $ {\bar{\Omega}} $ satisfying i)-ii), meaning that the solution may be not unique. 
	This is reflected in the fact that $ \bar{y}_0 $ and $ \bar{y}^* $ in lines 1 and 10 of Algorithm \ref{APO} may be not unique.
\end{remark}

\begin{figure}[t]
	\centering
	\subfigure[$ {\mathcal{O}_{\psi}^{K_1}(G)} $]
	{\label{fmpo}
		\includegraphics[height=4.3cm]{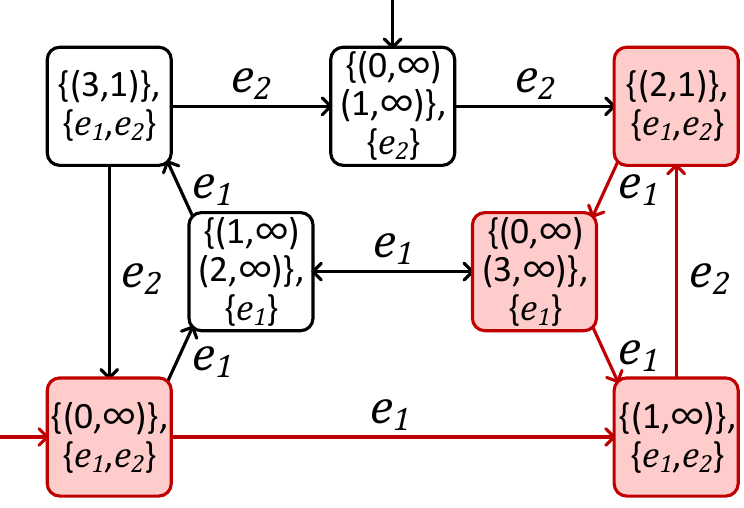} }
	\subfigure[$ {\bar{\Omega}} $]
	{\label{fop}
		\includegraphics[height=4.3cm]{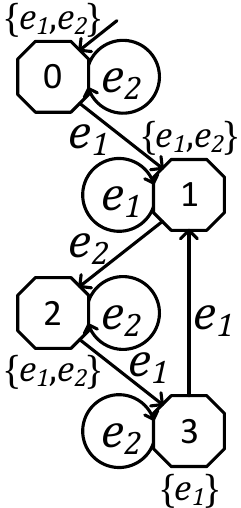}} 
	\caption{(a) Container $ \mathcal{O}_{\psi}^{K_1}(G) $ (with greedy strategy marked in red), collecting all possible sensor activation policies guaranteeing $K$-delayed opacity with $ K\!=\!K_1 $; (b) Optimal sensor activation policy $ {\bar{\Omega}} $ built with the marked container states.}
	\label{fdp}
\end{figure}

\begin{example}\label{EOP}
	The sensor activation policy in Fig. \ref{fsap} does not achieve $K$-delayed opacity with $K\!=\!K_1\!=\!1$ (cf. Example \ref{EKDO}). 
	To design a sensor activation policy achieving this, we obtain $ \mathcal{O}_{\psi}^{K_1}(G) $ in Fig. \ref{fmpo} from Algorithm \ref{APO}.
	Then, a greedy strategy is applied (marked in red) to obtain the sensor activation policy $ {\bar{\Omega}} $ in Fig. \ref{fop}.
	Note that in Fig. \ref{fmpo}, redundant sensing decisions (representing that some events being monitored cannot happen at the current step) are omitted.
	By Theorem \ref{APO}, $ {\bar{\Omega}} $ in Fig. \ref{fop} solves Problem \ref{P}.
\end{example}

\section{Conclusion}\label{8}
	This paper has investigated opacity of discrete event systems under delayed observations. 
	A novel notion of $ K $-delayed opacity has been introduced along with a delay observer structure to enable verification. 
	To synthesize and optimize the sensor activation policy, a new container structure has been constructed collecting all feasible sensor activation policies that preserve $ K $-delayed opacity. 
	The perspective adopted in this work is the one of a defender trying to mask the system to an eavesdropper. 
	An interesting point for future work is to adopt the perspective of the eavesdropper.

	\bibliographystyle{IEEEtran}
	\bibliography{References}

@book{cassandras2009introduction,
  title={Introduction to discrete event systems},
  author={Cassandras, Christos G and Lafortune, Stephane},
  year={2009},
  publisher={Springer Science \& Business Media}
}

@article{dallal2013most,
  title={On most permissive observers in dynamic sensor activation problems},
  author={Dallal, Eric and Lafortune, Stephane},
  journal={IEEE Transactions on Automatic Control},
  volume={59},
  number={4},
  pages={966--981},
  year={2013},
  publisher={IEEE}
}

@article{wang2010optimal,
  title={Optimal sensor activation for diagnosing discrete event systems},
  author={Wang, Weilin and Lafortune, St{\'e}phane and Girard, Anouck R and Lin, Feng},
  journal={Automatica},
  volume={46},
  number={7},
  pages={1165--1175},
  year={2010},
  publisher={Elsevier}
}

@article{sears2016minimal,
  title={Minimal sensor activation and minimal communication in discrete-event systems},
  author={Sears, David and Rudie, Karen},
  journal={Discrete Event Dynamic Systems},
  volume={26},
  number={2},
  pages={295--349},
  year={2016},
  publisher={Springer}
}

@article{yin2019general,
  title={A general approach for optimizing dynamic sensor activation for discrete event systems},
  author={Yin, Xiang and Lafortune, St{\'e}phane},
  journal={Automatica},
  volume={105},
  pages={376--383},
  year={2019},
  publisher={Elsevier}
}

@article{yin2019synthesis,
  title={Synthesis of dynamic masks for infinite-step opacity},
  author={Yin, Xiang and Li, Shaoyuan},
  journal={IEEE Transactions on Automatic Control},
  volume={65},
  number={4},
  pages={1429--1441},
  year={2019},
  publisher={IEEE}
}

@article{lafortune2018history,
  title={On the history of diagnosability and opacity in discrete event systems},
  author={Lafortune, St{\'e}phane and Lin, Feng and Hadjicostis, Christoforos N},
  journal={Annual Reviews in Control},
  volume={45},
  pages={257--266},
  year={2018},
  publisher={Elsevier}
}

@article{jacob2016overview,
  title={Overview of discrete event systems opacity: Models, validation, and quantification},
  author={Jacob, Romain and Lesage, Jean-Jacques and Faure, Jean-Marc},
  journal={Annual Reviews in Control},
  volume={41},
  pages={135--146},
  year={2016},
  publisher={Elsevier}
}

@article{wang2024distributed,
  author={Wang, Jiwei and Baldi, Simone and Yu, Wenwu and Yin, Xiang},
  journal={IEEE Transactions on Automatic Control}, 
  title={Distributed Fault Diagnosis in Discrete Event Systems With Transmission Delay Impairments}, 
  year={2024},
  volume={69},
  number={8},
  pages={5508-5515},
  doi={10.1109/TAC.2024.3369510}}

@inproceedings{wang2024fault,
  title={Fault Diagnosis and Prognosis in Partially-Observed Discrete Event Systems with Delayed Observations},
  author={Wang, Jiwei and Baldi, Simone and Yu, Wenwu and Yin, Xiang},
  booktitle={2024 IEEE 63rd Conference on Decision and Control (CDC)},
  pages={3675--3680},
  year={2024},
  organization={IEEE}
}

@inproceedings{saboori2007notions,
  title={Notions of security and opacity in discrete event systems},
  author={Saboori, Anooshiravan and Hadjicostis, Christoforos N},
  booktitle={2007 IEEE 46th Conference on Decision and Control},
  pages={5056--5061},
  year={2007},
  organization={IEEE}
}

@article{ji2018enforcement,
  title={Enforcement of opacity by public and private insertion functions},
  author={Ji, Yiding and Wu, Yi-Chin and Lafortune, St{\'e}phane},
  journal={Automatica},
  volume={93},
  pages={369--378},
  year={2018},
  publisher={Elsevier}
}

@ARTICLE{li2023opacity,
  author={Li, Xiaoyan and Hadjicostis, Christoforos N. and Li, Zhiwu},
  journal={IEEE Transactions on Automatic Control}, 
  title={Opacity Enforcement in Discrete Event Systems Using Extended Insertion Functions Under Inserted Language Constraints}, 
  year={2023},
  volume={68},
  number={11},
  pages={6797-6803},
  doi={10.1109/TAC.2023.3239433}}

@ARTICLE{xie2024optimal,
  author={Xie, Yifan and Li, Shaoyuan and Yin, Xiang},
  journal={IEEE Transactions on Automatic Control}, 
  title={Optimal Synthesis of Opacity-Enforcing Supervisors for Qualitative and Quantitative Specifications}, 
  year={2024},
  volume={69},
  number={8},
  pages={4958-4973},
  doi={10.1109/TAC.2023.3342880}}

@ARTICLE{li2025opacity,
  author={Li, Xiaoyan and Hadjicostis, Christoforos N. and Li, Zhiwu},
  journal={IEEE Transactions on Automation Science and Engineering}, 
  title={Opacity Enforcement in Discrete Event Systems Using Modification Functions}, 
  year={2025},
  volume={22},
  number={},
  pages={3252-3264},
  doi={10.1109/TASE.2024.3391617}}

@article{cassez2012synthesis,
  title={Synthesis of opaque systems with static and dynamic masks},
  author={Cassez, Franck and Dubreil, J{\'e}r{\'e}my and Marchand, Herv{\'e}},
  journal={Formal Methods in System Design},
  volume={40},
  pages={88--115},
  year={2012},
  publisher={Springer}
}

@article{mizoguchi2021abstraction,
  title={Abstraction-based control under quantized observation with approximate opacity using symbolic control barrier functions},
  author={Mizoguchi, Masashi and Ushio, Toshimitsu},
  journal={IEEE Control Systems Letters},
  volume={6},
  pages={2222--2227},
  year={2021},
  publisher={IEEE}
}

@article{duan2023edit,
  title={Edit mechanism synthesis for opacity enforcement under uncertain observations},
  author={Duan, Wei and Liu, Ruotian and Fanti, Maria Pia and Hadjicostis, Christoforos N and Li, Zhiwu},
  journal={IEEE Control Systems Letters},
  volume={7},
  pages={2041--2046},
  year={2023},
  publisher={IEEE}
}
	
\end{document}